\documentclass[10pt]{amsart}
\usepackage{bm}
\usepackage{amsmath}
\usepackage{amsthm}
\usepackage{color}
\usepackage{amscd}
\usepackage{amsfonts}
\usepackage{graphicx}
\usepackage{epsfig}
\usepackage{amssymb,latexsym}
%\usepackage{graphicx}
%\usepackage{refcheck}

%% See the ``Article customise'' template for come common customisations
%%\newcommand{\tr}{\operatorname{tr}}
%%\newcommand{\supp}{\operatorname{supp}}
%%\newcommand{\adj}{\operatorname{adj}}

%%\newcommand{\dist}{\operatorname{dist}}
%\newcommand{\curl}{\operatorname{curl}}
%%\newcommand{\intconv}{\operatorname{intconv}}

%\newcommand{\dim}{\operatorname{dim}}
%\newcommand{\rank}{\operatorname{rank}}
%\newcommand{\CC}{\mathcal{C}}
%%
\def\bbu{\bm{u}}
\def\bbx{\bm{x}}
\def\bu{\mathbf{u}}

\def\bx{\mathbf{x}}
\def\bz{\mathbf{z}}

\def\bn{\mathbf{n}}
\def\by{\mathbf{y}}
\def\be{\mathbf{e}}

\def\bbB{\mathbb B}

\def\bbf{\mathbf{f}}

\def\bbS{\mathbb{S}}
\def\R{\mathbb{R}}

\def\bQ{\mathbf{Q}}
\def\F{\mathbf F}
\def\A{\mathcal A}
\def\C{\mathcal C}
\def\id{\mathbf{1}}
\def\bcero{\mathbf 0}

\newtheorem{theorem}{Theorem}[section]
\newtheorem{corollary}[theorem]{Corollary}
\newtheorem{definition}{Definition}[section]
\newtheorem{lemma}[theorem]{Lemma} 
\newtheorem{proposition}[theorem]{Proposition}

\numberwithin{equation}{section}

\begin{document}

\title[Discontinuous ODE systems]{Beyond Peano's theorem: a variational look at discontinuous ODE systems}
\author{Pablo Pedregal}
\address{Departamento de Matemáticas. Universidad de Castilla La
Mancha. Campus de Ciudad Real, Spain} \email{pablo.pedregal@uclm.es}
\thanks{Supported by grants PID2023-151823NB-I00, and  SBPLY/23/180225/000023}\subjclass[2020]{}
\keywords{}

\date{} % delete this line to display the current date
\begin{abstract}
We propose a framework to define solutions of ODE systems under a novel condition that goes well beyond the usual continuity condition required in the classical theory of ODEs (Peano's or Picard's theorems). We illustrate our results with some simple but enlightening examples, including some facts about Sobolev fields, and mention some relevant questions to proceed with this analysis further. 
\end{abstract}
%\end{abstract}
%%% BEGIN DOCUMENT
\maketitle
\section{Introduction}
The existence theory for initial-value, Cauchy problems for (non-linear, autonomous) ODE systems of the form
\begin{equation}\label{primera}
\bx'(t)=\bbf(\bx(t))\hbox{ in }(0, T),\quad \bx(0)=\bx_0\in\R^N,\quad \bbf(\bx):\Omega\subset\R^N\to\R^N,
\end{equation}
is one of those classic fields in Analysis that has hardly changed with the pass of time. Every analyst has his or her own favorite treatise in the field. There are hundreds of them, some utilized by whole generations of analysts. 

Two of the basic such existence theorems require as an essential ingredient the (Lipschitz) continuity of the right-hand side $\bbf$ in \eqref{primera}. Even if there has been attempts to generalize such continuity conditions to adapt the problem to more general scenarios and needs (check the Introduction in \cite{ambrosio} for some comments and references in this regard), a direct approach to problem \eqref{primera} as such in the absence of continuity for $\bbf$ has not been developed, to the best of our knowledge. Some variants essentially rely on the classical Lipschitz condition, or exploit in a fundamental way the particular structure of the system.  
There are, however, at least two main frameworks where non-smooth/non-continuous right-hand sides in \eqref{primera} have been considered systematically: 
\begin{enumerate}
\item The needs of PDEs of Mathematical Physics have pushed to more general consistent theories for Cauchy problems like the one in \eqref{primera}. This is particularly relevant for transport equations or fluid dynamics. 
Since the seminal work \cite{dipernalions}, some further development has taken place. \cite{ambcrip}, \cite{ambrosio} are nice references where interested readers may learn about these new ideas. 
\item Discontinuous, piece-wise continuous right-hand sides are being studied from the perspective of global dynamics. References like \cite{filippov}, \cite{smirnov} are good places where more general differential systems are examined. Look at \cite{llibreponce} for an example in which important features about global dynamics of some discontinuous differential systems are studied. The amount of literature in this area is growing very rapidly. 
\end{enumerate}
Our proposal here breaks up definitely with the continuity barrier of the right-hand side $\bbf$ in \eqref{primera}, and allows for a generalized concept of solution. 

\begin{definition}\label{basica}
Let $\Omega\subset\R^N$ be an open set, and $\bbf(\bx):\Omega\to\R^N$ be a Borel field in $\R^N$, $N>1$.  
\begin{enumerate}
\item We say that $\bbf$ is self-continuous at $\bx\in\Omega$ if 
%it admits a representative (not relabeled) such that
\begin{equation}\label{limite}
\lim_{\epsilon\to0^+}\bbf\left(\bx+\epsilon\bbf(\bx)\right)=\bbf(\bx). 
\end{equation}
It is self-continuous in $\Omega$ if \eqref{limite} holds for every $\bx\in\Omega$. 
\item In more general and flexible terms, we say that such a map $\bbf$ is self-continuous at $\bx\in\Omega$, if there is a $\C^1$-curve
\begin{equation}\label{curva}
\phi(\epsilon; \bx):[0, \epsilon_0)\to\Omega,\quad \phi(0; \bx)=\bx, \phi'(0; \bx)=\bbf(\bx), \epsilon_0=\epsilon_0(\bx)>0,
\end{equation}
such that
\begin{equation}\label{germen}
\lim_{\epsilon\to0^+}[\phi'(\epsilon; \bx)-\bbf(\phi(\epsilon; \bx))]=\bcero.
\end{equation}
\end{enumerate}
In a similar way, $\bbf$ is said to be self-continuous in $\Omega$ if it is so at every point $\bx\in\Omega$. 
\end{definition}
Condition \eqref{germen}, under  \eqref{curva}, implies that
$$
\lim_{\epsilon\to0}\bbf(\phi(\epsilon; \bx))=\bbf(\bx).
$$
It is interesting to realize how the curve $\phi(\epsilon; \bx)$ in \eqref{curva}, through condition \eqref{germen}, is like a ``germ" at $\bx$ for the solution of the initial differential system \eqref{primera}. The whole point is to assemble all those germs together to produce a solution curve globally in some finite interval. 

Note that \eqref{limite} is void (trivially true) in places where $\bbf$ vanishes. 

It should then be clear that we are in need of a more flexible, coherent concept of solution for a discontinuous ODE system \eqref{primera}. To introduce this new concept, we will be working in the classical Sobolev space $W^{1, 1}(0, T; \Omega)$ of absolutely continuous paths with image in $\Omega\subset\R^N$ in the interval $[0, T]$ (\cite{brezis}, \cite{leoni}).

\begin{definition}\label{generalizada}
Let $\bbf(\bx):\Omega\to\R^N$ be a Borel field in $\R^N$, $N>1$. 
An absolutely-continuous path 
$$
\bx(t):[0, T]\to\R^N,\quad \bx(0)=\bx_0, 
$$
is said to be a generalized solution of the (non-continuous) ODE system
$$
\bx'(t)=\bbf(\bx(t))\hbox{ in }(0, T),\quad \bx(0)=\bx_0,
$$
if there is a sequence of absolutely continuous paths 
$$
\{\bx_j\}\subset W^{1, 1}(0, T; \Omega),
$$ 
such that
\begin{equation}\label{errorj}
\bx'_j(t)=\bbf(\bx_j(t))+\varepsilon_j(t),
\end{equation}
and
$$
\bx_j\rightharpoonup\bx \hbox{ in }W^{1, 1}(0, T; \R^N), \quad \int_0^T|\varepsilon_j(t)|\,dt\to0, 
$$
as $j\to\infty$.
\end{definition}

This concept is, in principle, broader than taking a family of continuous approximations $\{\bbf_j\}$ of the discontinuous map $\bbf$, and considering the approximating paths $\{\bx_j\}$ determined by
$$
\bx'_j(t)=\bbf_j(\bx_j(t))\hbox{ in }(0, T),\quad \bx_j(0)=\bx_0.
$$
One would have to ensure, for the equivalence of both methods, at least the point-wise limit 
$$
\lim_{j\to\infty}\bbf_j(\bx)=\bbf(\bx)
$$
for every $\bx$, even for vectors in the discontinuity set of $\bbf$. The remarkable property of Definition \ref{generalizada} is that it only depends on the vector field $\bbf$ itself, and does not make any reference to the particular potential approximating sequence $\{\bbf_j\}$ and how this might be assembled depending on special properties of the underlying $\bbf$. 

On the other hand, if a certain absolutely continuous path $\bx$ is the weak (or strong) limit in $W^{1, 1}(0, T; \R^N)$ of a sequence of true solutions $\bx_j$ (in particular if $\bx_j\equiv\bx$ for all $j$)
$$
\bx'_j(t)=\bbf(\bx_j(t)),\quad \varepsilon_j\equiv0\hbox{ in \eqref{errorj}},
$$
then $\bx$ becomes a generalized solution, even if the difference 
$$
\bx'(t)-\bbf(\bx(t))
$$ 
is far from vanishing. We will emphasize this point later.

Our main existence result tights together both definitions above, and establishes that, under self-continuity, one can always define integral curves in a coherent way. The self-continuity property is, of course, a sufficient condition.
\begin{theorem}\label{central}
Let $\bbf(\bz):\R^N\to\R^N$ be self-continuous, and
$$
|\bbf(\bz)|\le C_1|\bz|+C_0,\quad C_1, C_0>0,
$$ 
for every $\bz\in\R^N$. Then the initial-value, Cauchy problem
$$
\bx'(t)=\bbf(\bx(t))\hbox{ in }(0, T),\quad \bx(0)=\bx_0
$$
admits at least one  generalized, absolutely continuous solution $\bx(t)$ for every $\bx_0\in\R^N$, and every positive $T$. 
\end{theorem}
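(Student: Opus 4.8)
The plan is to realize the generalized solution as a limit of piecewise germ curves — an ``Euler scheme'' in which, at each node, we follow the self-continuity germ $\phi(\cdot;\bx)$ for a short time. Since here $\Omega=\R^N$ there is no boundary to worry about, and the linear growth bound will play two distinct roles: it prevents finite-time escape to infinity during the construction, and it supplies the compactness needed to pass to the limit. First I would record the a priori estimate: if an absolutely continuous path $\by$ satisfies $\by'=\bbf(\by)+\varepsilon$ with $\|\varepsilon\|_{L^\infty(0,T)}\le1$, then from $|\bbf(\bz)|\le C_1|\bz|+C_0$ and Gronwall's inequality one obtains $\|\by\|_{L^\infty(0,T)}\le R$, with $R$ depending only on $|\bx_0|$, $C_0$, $C_1$ and $T$; consequently $\|\by'\|_{L^\infty(0,T)}\le C_1R+C_0+1=:L$. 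Thus every path produced below is $L$-Lipschitz with values in $\overline{B}_R$, hence bounded in $W^{1,\infty}(0,T;\R^N)\subset W^{1,1}(0,T;\R^N)$.

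Next comes the construction of the approximating sequence. Fix $j$. Starting from $t_0=0$, $\by_0=\bx_0$, and given a reached node $(t_k,\by_k)$ with $t_k<T$, self-continuity at $\by_k$ — condition \eqref{germen} applied to the germ $\phi(\cdot;\by_k)$ — furnishes a supremal length $S_k>0$ (not exceeding the germ horizon $\epsilon_0(\by_k)$) such that $|\phi'(\epsilon;\by_k)-\bbf(\phi(\epsilon;\by_k))|\le 1/j$ for all $\epsilon\in[0,S_k]$. I would follow the germ for a step $s_k$ essentially equal to $\min(S_k,T-t_k)$, and set $\by_{k+1}=\phi(s_k;\by_k)$, $t_{k+1}=t_k+s_k$. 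Concatenating the germ pieces yields a path $\bx_j$ that is continuous, piecewise $C^1$, and satisfies $\bx_j'=\bbf(\bx_j)+\varepsilon_j$ with $|\varepsilon_j|\le 1/j$ pointwise on each piece; in particular $\int_0^T|\varepsilon_j|\,dt\le T/j\to0$.

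The delicate point — and the one I expect to be the main obstacle — is to show that this germ scheme actually reaches $t=T$ for each $j$. Because $\bbf$ is only Borel, the admissible step length $S_k$ has no uniform positive lower bound, and it need not depend continuously on the base point, so a priori the nodes $t_k$ could accumulate at some $\tau^\ast<T$. To rule this out I would run a maximal-time argument: along the construction the path is uniformly $L$-Lipschitz (Step~1), so if $t_k\uparrow\tau^\ast$ the points $\by_k$ are Cauchy and the partial path extends to an absolutely continuous path on $[0,\tau^\ast]$ with a well-defined endpoint $\by_\infty=\lim_k\by_k\in\R^N$; self-continuity \emph{at the single point} $\by_\infty$ then provides a fresh germ with a strictly positive admissible step-length, allowing the construction to be continued beyond $\tau^\ast$. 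Formally, letting $\tau^\ast$ be the supremum of times reachable by such $j$-admissible germ paths, the Lipschitz bound shows the supremum is attained by an admissible path, and the one-step extension at its endpoint contradicts $\tau^\ast<T$; hence $\tau^\ast=T$. The essential subtlety to handle with care is precisely that one cannot hope for a positive lower bound on step sizes, only the endpoint-by-endpoint positivity guaranteed by \eqref{germen}, which must be combined with the compactness coming from linear growth.

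Finally, passing to the limit is routine. By Step~1 the family $\{\bx_j\}$ is bounded in $W^{1,\infty}$ and equi-Lipschitz, so Arzelà–Ascoli gives a subsequence converging uniformly to some Lipschitz $\bx$ with $\bx(0)=\bx_0$, while the derivatives $\bx_j'$, being bounded in $L^\infty(0,T)$ and hence uniformly integrable, converge weakly in $L^1$ by Dunford–Pettis, necessarily to $\bx'$. Therefore $\bx_j\rightharpoonup\bx$ in $W^{1,1}(0,T;\R^N)$ with $\int_0^T|\varepsilon_j|\,dt\to0$, and $\bx$ is a generalized solution in the sense of Definition~\ref{generalizada}.
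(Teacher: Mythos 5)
Your proposal is correct in substance, but it takes a genuinely different route from the paper. The paper never builds approximating paths by concatenating germs: it studies the value function $m(r)=\inf\{E_{0,r}(\bx):\bx\in\A_{0,r,\bx_0}\}$, proves (Lemmas \ref{auxiliar1} and \ref{auxiliar2}) that the infimum over final endpoints $\bz$ is attained at some $\bz_r$ because $G(\bz)$ is $1$-Lipschitz and minimizing endpoints are a priori bounded, and then inserts a \emph{single} germ at $\bz_r$ on $[r,r+h]$ into the splitting identity \eqref{igualdad} to show that $m$ has vanishing right derivative, whence $m\equiv0$ and a minimizing sequence with $E(\bx_j)\to0$ exists; your final compactness step then coincides with the paper's second step. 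The dynamic-programming device is exactly what lets the paper sidestep the difficulty you correctly identify as the crux of your scheme: since only one germ is ever attached at a time, and the accumulation of step times is absorbed into the continuity of $m(r)$ and $G(\bz)$, no transfinite or maximal-time bookkeeping is needed. Your Euler-germ construction buys something in return: it is more constructive, avoids Lemma \ref{auxiliar2} and the attainment argument altogether, and produces approximations with \emph{pointwise} error $|\varepsilon_j|\le 1/j$, stronger than the $L^1$-small error the paper obtains.

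One step of yours needs more care than you give it. The sentence ``the Lipschitz bound shows the supremum is attained by an admissible path'' is not sound if $\tau^\ast$ is the supremum over \emph{arbitrary} $j$-admissible paths: admissibility, i.e. the a.e.\ bound $|\bx'(t)-\bbf(\bx(t))|\le 1/j$, is \emph{not} stable under uniform (or weak $W^{1,1}$) limits of distinct paths when $\bbf$ is discontinuous --- this failure of lower semicontinuity is the central theme of the whole paper. The repair is standard and consistent with your own preceding sentence: order admissible paths by extension (nested restriction), note that the union along any chain is $L$-Lipschitz and extends to the closed interval while remaining admissible (the added endpoint is a null set), apply Zorn's lemma to get a maximal admissible path, and then use self-continuity at its terminal point to contradict maximality if its horizon is less than $T$; countability of the resulting well-ordered set of breakpoints keeps the path a countable concatenation of $\C^1$ germ pieces, hence Lipschitz and absolutely continuous. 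With that formalization your argument is complete.
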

Uniqueness of generalized solutions is out of the question. Indeed, non-uniqueness of generalized solutions is even more dramatic than with classical solutions for non-Lipschitzian maps. Dependence on initial conditions and/or parameters cannot, obviously, be continuous if $\bbf$ is not.

Our strategy is variational in the sense that we will look at the basic initial-value Cauchy problem
$$
\bx'(t)=\bbf(\bx(t))\hbox{ in }(0, T),\quad \bx(0)=\bx_0,
$$
through the variational problem consisting in minimizing the error functional
\begin{equation}\label{error}
E(\by)=\int_0^T|\by'(t)-\bbf(\by(t))|\,dt,\quad \by\in W^{1, 1}(0, T; \R^N), \by(0)=\bx_0.
\end{equation}
It is clear that solutions $\bx$ of the above Cauchy problem will correspond with minimizers $\by$ provided that $E(\by)=0$. This alternative viewpoint was introduced in a classic setting under continuity or even Lipschitz continuity in \cite{amatped}, and later further developed in \cite{amatped2}, and \cite{pedregal}. 

It is important to notice that error functional $E$ in \eqref{error} is not, in general, weak lower semicontinuous in $W^{1, 1}(0, T; \Omega)$ precisely because mapping $\bbf$ is not assumed to be continuous. This observation invalidates the use of the direct method of the Calculus of Variations (\cite{dacorogna}) to deal with the problem of minimizing $E$ under suitable end-point conditions. In fact, under the continuity of field $\bbf$, one can easily recover typical, classic existence results for problem \eqref{primera}. We will check this later to vindicate coherence of our proposal with classical existence theories. In addition, we caution readers about the easiness with which the continuity of $\bbf$ may slips through our arguments as we are so used to claim this property for the right-hand side in \eqref{primera}. 

Our ideas can straightforwardly be generalized (as in \cite{amatped}) to cover:
\begin{enumerate}
\item a local result where the time horizon $T$ of existence of solutions can only be ensured to be small if a linear upper bound on the size of $\bbf$ is not global; 
\item  a non-autonomous scenario with an explicit dependence of $\bbf$ on time $t$; and 
\item an implicit system of the form
$$
\F(t, \bx(t), \bx'(t))=\bcero\hbox{ in }(0, T),\quad \bx(0)=\bx_0.
$$
\end{enumerate}
Several simple examples are treated in Section \ref{ejemplos} to illustrate our results, and the self-continuity condition. In Section \ref{sob}, we briefly comment on the Sobolev case. 

\section{Proof of main result}
One elementary fact which we will be in need of along the proof of our main result below is the following.
\begin{lemma}\label{auxiliar1}
 Suppose that $\bbf(\bz):\R^N\to\R^N$ is measurable, and of linear growth
\begin{equation}\label{cotasup}
|\bbf(\bz)|\le C_1|\bz|+C_0,\quad C_1, C_0>0,
\end{equation}
for every $\bz\in\R^N$. If for $E$ in \eqref{error}, we have $E(\by)<+\infty$, then $\by\in L^\infty(0, T; \R^N)$ and
$$
|\by(t)|\le (TC_0+|\bx_0|+E(\by))e^{C_1T}
$$
for every $t\in(0, T)$.
\end{lemma}
\begin{proof}
We have
\begin{align}
|\by(t)-\bx_0|\le&\int_0^t|\by'(s)|\,ds\nonumber\\
\le &\int_0^T|\by'(s)-\bbf(\by(s))|\,ds+\int_0^t|\bbf(\by(s))|\,ds\nonumber\\
\le & E(\by)+C_1\int_0^t|\by(s)|\,ds+C_0.\nonumber
\end{align}
Conclude by Gromwall's lemma. 
\end{proof}
Next, we introduce some notation to facilitate our arguments. For $0<s<r<T$, we put
\begin{gather}
\A_{s, r, \bx_0}=\{\bx\in W^{1, 1}(s, r; \R^N): \bx(s)=\bx_0\}=\bx_0+\A_{s, r, \bcero},\nonumber\\
\A_{s, r, \bx_0, \bx_1}=\{\bx\in W^{1, 1}(s, r; \R^N): \bx(s)=\bx_0, \bx(r)=\bx_1\}=L_{s, r, \bx_0, \bx_1}+ W^{1, 1}_0(s, r; \R^N),\nonumber
\end{gather}
where $L_{s, r, \bx_0, \bx_1}$ is the linear path
\begin{gather}\label{lineal}
L_{s, r, \bx_0, \bx_1}(t)=\bx_0+\frac1{r-s}(\bx_1-\bx_0)(t-s),\quad t\in(r, s).
\end{gather}
Similarly, put
$$
E_{s, r}(\bx)=\int_s^r|\bx'(t)-\bbf(\bx(t))|\,dt
$$
for an absolutely continuous path $\bx$ in the interval $(s, r)$. 

For the auxiliary result that follows, we define the scalar function
$$
G(\bz): \Omega\to\R,\quad G(\bz)=\inf\{E_{0, r}(\bx): \bx\in\A_{0, r, \bx_0, \bz}\},
$$
for fixed positive $r>0$, and $\bx_0\in\Omega$. In its proof, as well as in the proof of our main result, we will be using the same fundamental idea which, on the other hand, was introduced and utilized in \cite{amatped} in a classical setting. 

\begin{lemma}\label{auxiliar2}
If the upper bound \eqref{cotasup} is valid for $\bbf$, then for every choice of $r>0$ and $\bx_0\in\Omega$, the above function $G(\bz)$ is continuous. 
\end{lemma}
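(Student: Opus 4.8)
The plan is to prove the sharper statement that $G$ is $1$-Lipschitz on $\R^N$, which in particular gives the asserted continuity. Note first that $G$ is finite: it is nonnegative by definition, and testing with the linear competitor $L_{0,r,\bx_0,\bz}$ gives, via the triangle inequality and the growth bound \eqref{cotasup}, $G(\bz)\le E_{0,r}(L_{0,r,\bx_0,\bz})\le |\bz-\bx_0|+\int_0^r|\bbf(L_{0,r,\bx_0,\bz}(t))|\,dt<+\infty$. To compare $G(\bz_1)$ and $G(\bz_2)$ the naive idea would be to take a near-optimal path ending at $\bz_1$ and translate it so as to end at $\bz_2$; this fails outright, because the translated path evaluates $\bbf$ at shifted arguments and, $\bbf$ being merely measurable, there is no handle on $|\bbf(\cdot+\text{shift})-\bbf(\cdot)|$.

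The device that avoids this — the fundamental idea of \cite{amatped} — keeps the \emph{image} of the path fixed and reparametrizes it in time, appending a short corrector segment. Fix $\delta>0$ and choose $\bx\in\A_{0,r,\bx_0,\bz_1}$ with $E_{0,r}(\bx)\le G(\bz_1)+\delta$. For small $\eta\in(0,r)$ define $\tilde\bx$ on $[0,r]$ by $\tilde\bx(t)=\bx\bigl(\tfrac{r}{r-\eta}\,t\bigr)$ on $[0,r-\eta]$, a time-compression still reaching $\bz_1$ but now at time $r-\eta$, and by the linear segment from $\bz_1$ to $\bz_2$ on $[r-\eta,r]$; then $\tilde\bx\in\A_{0,r,\bx_0,\bz_2}$. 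On $[0,r-\eta]$ the substitution $s=\tfrac{r}{r-\eta}t$ evaluates $\bbf$ at exactly the original points $\bx(s)$ — this is precisely where the discontinuity is neutralized — and yields a contribution $\int_0^r|\bx'(s)-\tfrac{r-\eta}{r}\bbf(\bx(s))|\,ds\le E_{0,r}(\bx)+\tfrac{\eta}{r}\int_0^r|\bbf(\bx(s))|\,ds$. On $[r-\eta,r]$, the triangle inequality and \eqref{cotasup} (the segment lies in the ball of radius $\max(|\bz_1|,|\bz_2|)$) bound the contribution by $|\bz_2-\bz_1|+\eta(C_1\max(|\bz_1|,|\bz_2|)+C_0)$.

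Adding the two pieces gives $G(\bz_2)\le G(\bz_1)+\delta+\tfrac{\eta}{r}\int_0^r|\bbf(\bx(s))|\,ds+|\bz_2-\bz_1|+\eta(C_1\max(|\bz_1|,|\bz_2|)+C_0)$, and it remains to pass to the limit. The only point requiring care is that $\int_0^r|\bbf(\bx(s))|\,ds$ stay bounded uniformly in the near-optimal $\bx$; this is exactly what Lemma \ref{auxiliar1} provides, since taking $\delta\le1$ forces $\|\bx\|_{L^\infty}\le R$ with $R$ depending only on $\bz_1,r,C_0,C_1$, whence $\int_0^r|\bbf(\bx)|\,ds\le r(C_1R+C_0)$ independently of $\delta$. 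Letting $\delta\to0$ and then $\eta\to0$ gives $G(\bz_2)\le G(\bz_1)+|\bz_2-\bz_1|$, and exchanging $\bz_1,\bz_2$ yields $|G(\bz_1)-G(\bz_2)|\le|\bz_1-\bz_2|$. The main obstacle is thus the non-continuity of $\bbf$, and the whole weight of the argument rests on dodging it by time-reparametrization rather than by any pointwise estimate on $\bbf$; everything else reduces to the uniform $L^\infty$ bound of Lemma \ref{auxiliar1}.
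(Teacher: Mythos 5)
Your proof is correct and takes essentially the same approach as the paper's: the time-reparametrization that keeps the image of the path fixed (so that $\bbf$ is evaluated at the original points, neutralizing its discontinuity) together with an appended linear segment, with Lemma \ref{auxiliar1} supplying the uniform bound on $\int_0^r|\bbf(\bx(s))|\,ds$. The only difference is organizational: the paper phrases the rescaling as continuity of $G(r,\bz)$ in $r$ and then splits the interval, whereas you merge both steps into a single explicit competitor, which yields the same Lipschitz estimate $|G(\bz_1)-G(\bz_2)|\le|\bz_1-\bz_2|$ noted after the lemma.
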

As we shall see in the proof, it is even true that
$$
|G(\bz)-G(\by)|\le|\bz-\by|.
$$
\begin{proof}
We will divide the proof in two main steps.

For the first one, it is important to explicitly write the dependence of our function $G$ on $r$, namely
$$
G(r, \bz)=\inf\{E_{0, r}(\bx): \bx\in\A_{0, r, \bx_0, \bz}\},
$$
as we would like to show that for each fixed $\bz$, there is continuity on $r$. Vector $\bx_0$ is assumed to be fixed throughout. To that end, we put
\begin{gather}
G(r, \bz)=\inf\{E_{0, r}(\bx): \bx\in\A_{0, r, \bx_0, \bz}\},\nonumber\\
G(s, \bz)=\inf\{E_{0, s}(\by): \by\in\A_{0, s, \bx_0, \bz}\}.\nonumber
\end{gather}
Define the natural bijective transformation 
$$
\bx\in\A_{0, r, \bx_0, \bz}\mapsto\by\in\A_{0, s, \bx_0, \bz},\quad \by(\tau)=\bx(r\tau/s),
$$
through the natural change of time scale. 
After a few elementary calculations and a natural change of variables, we find that
$$
E_{0, s}(\by)=\int_0^r|\bx'(t)-\frac sr\bbf(\bx(t))|\,dt.
$$
From here, and bearing in mind \eqref{cotasup}, 
$$
E_{0, s}(\by)\le E_{0, r}(\bx)+\left|1-\frac sr\right| \left(C_1\int_0^r|\bx(t)|\,dt+C_0r\right).
$$
It is then true that
$$
G(s, \bz)\le E_{0, r}(\bx)+\left|1-\frac sr\right| \left(C_1\int_0^r|\bx(t)|\,dt+C_0r\right)
$$
for each individual $\bx\in\A_{0, r, \bx_0, \bz}$. If we now take the infimum in such paths $\bx$, by Lemma \ref{auxiliar1}, we conclude the existence of a constant $M$ (depending possibly on the time horizon $T$) with
$$
G(s, \bz)\le G(r, \bz)+\left|1-\frac sr\right| M.
$$
Since $r$ and $s$ are arbitrary, by interchanging their roles, we have that
$$
|G(s, \bz)-G(r, \bz)|\le M\max\left\{\left|1-\frac sr\right|, \left|1-\frac rs\right|\right\},
$$
which shows the claimed continuity. 

For the second step, consider $r$ given, and let $s<r$. Notice that for every $\by\in\R^N$ fixed, 
$$
G(r, \bz)\le\inf_\bx\{E_{0, s}(\bx): \bx\in\A_{0, s, \bx_0, \by}\}+\inf_\bx\{E_{s, r}(\bx): \bx\in\A_{s, r, \by, \bz}\}.
$$
In this inequality, we are using the fact that absolutely continuous paths can be defined independently in a finite union of disjoint open intervals as long as they remain globally continuous. Even further we can use the linear path $L\equiv L_{s, r, \by, \bz}$ in \eqref{lineal} in the second term above, to deduce 
$$
G(r, \bz)\le G(s, \by)+\int_s^r\left|\frac1{r-s}(\bz-\by)-\bbf(L(t))\right|\,dt. 
$$
By introducing a similar constant $M$ as in the preceding step, we see that
$$
G(r, \bz)\le G(s, \by)+|\bz-\by|+M(r-s). 
$$
If we let $s\to r$, by the continuity shown in the previous step, 
$$
G(r, \bz)\le G(r, \by)+|\bz-\by|;
$$
and by interchanging the roles of $\bz$ and $\by$, we finish the proof.
\end{proof}
We are now ready to prove our main theorem, which we restate for the convenience of readers. 
\begin{theorem}
Let $\bbf(\bz):\R^N\to\R^N$ be self-continuous, and
$$
|\bbf(\bz)|\le C_1|\bz|+C_0,\quad C_1, C_0>0,
$$ 
for every $\bz\in\R^N$. Then the initial-value, Cauchy problem
$$
\bx'(t)=\bbf(\bx(t))\hbox{ in }(0, T),\quad \bx(0)=\bx_0
$$
admits at least one  generalized, absolutely continuous solution $\bx(t)$ for every $\bx_0\in\R^N$, and every positive $T$. 
\end{theorem}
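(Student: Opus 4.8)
The plan is to read the whole problem through the error functional $E$ in \eqref{error}. Since $\int_0^T|\varepsilon_j(t)|\,dt=E(\bx_j)$ whenever $\bx_j'=\bbf(\bx_j)+\varepsilon_j$, Definition \ref{generalizada} amounts to producing a sequence $\{\bx_j\}\subset\A_{0,T,\bx_0}$ with $E(\bx_j)\to0$ that converges weakly in $W^{1,1}(0,T;\R^N)$. Thus the theorem splits into two essentially independent tasks: (i) show that $\inf\{E(\by):\by\in\A_{0,T,\bx_0}\}=0$, which is where self-continuity enters; and (ii) show that any such minimizing sequence admits a weakly convergent subsequence, which is pure compactness based on the linear growth and Lemma \ref{auxiliar1}.

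For the compactness (ii), I would take $\{\bx_j\}$ with $E(\bx_j)\to0$ and write $\bx_j'=\bbf(\bx_j)+\varepsilon_j$ with $\|\varepsilon_j\|_{L^1}=E(\bx_j)\to0$. Lemma \ref{auxiliar1} bounds $\{\bx_j\}$ uniformly in $L^\infty$, so by \eqref{cotasup} the fields $\bbf(\bx_j)$ are bounded in $L^\infty(0,T;\R^N)$, hence bounded and equi-integrable in $L^1$; together with $\varepsilon_j\to0$ strongly in $L^1$ this makes $\{\bx_j'\}$ bounded and equi-integrable in $L^1$. The Dunford--Pettis theorem then yields $\bx_j'\rightharpoonup\bw$ in $L^1$ along a subsequence, while equi-integrability makes $\{\bx_j\}$ equicontinuous, so Arzelà--Ascoli gives uniform convergence $\bx_j\to\bx$ with $\bx(0)=\bx_0$ and $\bx'=\bw$. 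Hence $\bx_j\rightharpoonup\bx$ in $W^{1,1}(0,T;\R^N)$, and this $\bx$ is the desired generalized solution.

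The heart of the matter is (i). Here I would use the value function $G(r,\bz)$ of Lemma \ref{auxiliar2} and set $\Phi(r)=\inf_{\bz\in\R^N}G(r,\bz)=\inf\{E_{0,r}(\bx):\bx\in\A_{0,r,\bx_0}\}$, which is continuous (from the estimates of Lemma \ref{auxiliar2} together with the uniform endpoint bound of Lemma \ref{auxiliar1}) and satisfies $\Phi(0)=0$; the goal is $\Phi(T)=0$. The key local input is self-continuity: given a near-optimal endpoint $\by$ at time $r$, concatenating a near-optimal path on $[0,r]$ reaching $\by$ with the germ curve $\phi(\cdot;\by)$ of \eqref{curva} on $[r,r+\delta]$ gives
\begin{equation}\label{localplan}
\Phi(r+\delta)\le\Phi(r)+\eta+\int_0^\delta|\phi'(\tau;\by)-\bbf(\phi(\tau;\by))|\,d\tau,
\end{equation}
where the last integral is $o(\delta)$ by \eqref{germen} and $\eta>0$ absorbs the optimality gap. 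Fixing $\epsilon>0$, I would run a continuation argument on $R=\sup\{r\in[0,T]:\Phi(r)\le\epsilon r/T\}$: by continuity of $\Phi$ the supremum is attained, so $\Phi(R)\le\epsilon R/T$, and if $R<T$ the $o(\delta)$ term in \eqref{localplan} lets me choose $\delta$ and $\eta$ so small that $\eta+\int_0^\delta|\phi'-\bbf(\phi)|\,d\tau\le\epsilon\delta/T$, whence $\Phi(R+\delta)\le\epsilon(R+\delta)/T$, contradicting the definition of $R$. Hence $R=T$ and $\Phi(T)\le\epsilon$; letting $\epsilon\to0$ gives $\Phi(T)=0$, which is exactly $\inf E=0$.

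I expect the main obstacle to be precisely the non-uniformity of the germ limit \eqref{germen} across points $\by$: the rate at which $\int_0^\delta|\phi'-\bbf(\phi)|\,d\tau$ becomes negligible depends on $\by$, so a naive partition of $[0,T]$ into equal subintervals need not produce a vanishing total error. The device that circumvents this is the comparison against the \emph{linear} target $\epsilon r/T$ in the continuation argument, which only uses the pointwise $o(\delta)$ rate at a single point at each step rather than any uniform modulus, and which crucially beats the cruder linear bound $M\delta$ coming from straight segments because it follows the germ curve itself. The remaining technical care is the continuity of $\Phi$, for which one verifies that the estimates of Lemmas \ref{auxiliar1} and \ref{auxiliar2} are uniform over the bounded set of near-optimal endpoints.
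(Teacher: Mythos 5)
Your overall architecture is the paper's own: the value function $m(r)=\Phi(r)$, concatenation of a near-optimal path with a germ curve at the moving endpoint, and the equi-integrability/Dunford--Pettis compactness in your step (ii), which is correct and essentially identical to the paper's second part (the paper gets equi-integrability from the estimate over arbitrary measurable $J\subset[0,T]$, you from the uniform $L^\infty$ bound on $\bbf(\bx_j)$ --- same substance). But step (i) contains a genuine gap: a circular dependence between the optimality gap $\eta$ and the step size $\delta$. At the critical time $R$ you must produce $\eta,\delta>0$ with
$$
\eta+\int_0^\delta|\phi'(\tau;\by)-\bbf(\phi(\tau;\by))|\,d\tau\le\frac{\epsilon\delta}{T},
$$
but the point $\by=\by(\eta)$ where the germ is planted is the endpoint of an $\eta$-optimal path, and the threshold $\delta_1(\by)$ below which the integral is at most, say, $\epsilon\delta/(2T)$ depends on $\by$, hence on $\eta$. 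Since you also need $\eta\le\epsilon\delta/(2T)$, i.e.\ $\delta\ge 2T\eta/\epsilon$, you need some $\eta$ with $2T\eta/\epsilon\le\delta_1(\by(\eta))$ --- and nothing guarantees this: as $\eta\to0$ the near-optimal endpoints move, and $\delta_1(\by(\eta))$ may shrink faster than $\eta$, because self-continuity carries no modulus whatsoever, not even on compact sets. Your closing claim that the linear target $\epsilon r/T$ ``only uses the pointwise $o(\delta)$ rate at a single point at each step'' is accurate only if that single point can be fixed \emph{before} $\delta$ is chosen, which your $\eta>0$ prevents.

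The repair is precisely the step the paper inserts, and it is available from the two lemmas you already cite: take $\eta=0$ by showing the inner infimum over endpoints is \emph{attained}. By Lemma \ref{auxiliar1}, the endpoints of paths with $E_{0,r}\le m(r)+1$ lie in a fixed ball, and by Lemma \ref{auxiliar2} the function $\bz\mapsto G(r,\bz)$ is continuous (indeed $1$-Lipschitz), so $\inf_{\bz}G(r,\bz)$ is a minimum, attained at some $\bz_r$ with $m(r)=G(r,\bz_r)$. Planting the germ curve at this exact minimizer gives $0\le m(r+h)-m(r)\le\int_0^h|\phi'(\tau;\bz_r)-\bbf(\phi(\tau;\bz_r))|\,d\tau=o(h)$, a vanishing right derivative of $m$ at every $r$; combined with the continuity of $m$ in $r$ (your first use of Lemma \ref{auxiliar2}) and $m(0)=0$, this forces $m\equiv0$. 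Your $\epsilon r/T$ continuation argument is then just an unfolding of the standard ``continuous with zero right Dini derivative implies constant'' lemma and goes through verbatim once $\eta=0$. With this single fix your proposal coincides with the paper's proof; no other changes are needed.
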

\begin{proof}
We divide the proof in two parts. For the first one, 
consider, as introduced above, the family of functionals and feasible classes of paths
$$
E_{s, r}(\bx)=\int_s^r|\bx'(t)-\bbf(\bx(t))|\,dt,\quad \bx\in \A_{s, r, \bx_0},
$$
and put
$$
m(r):[0, T]\to\R^+,\quad m(r)=\inf \{E_{0, r}(\bx): \bx\in \A_{0, r, \bx_0}\},
$$
for $0<r<T$. Note that each functional $E_{s, t}$ is well-defined because its integrand, as the composition of a measurable function and a continuous path, is measurable. 

It is evident that $m(0)=0$. For positive $r>0$, and $h>0$, let us try to relate $m(r+h)$ to $m(r)$. To this end, we realize, as we have already argued before, that
\begin{equation}\label{igualdad}
m(r+h)=\inf_{\bz\in\R^N}\left\{\ \inf\{E_{0, r}(\bx): \bx\in\A_{0, r, \bx_0, \bz}\}+\inf\{E_{r, r+h}(\bx): \bx\in\A_{r, r+h, \bz}\}\ \right\}.
\end{equation}
On the other hand, note that
\begin{equation}\label{nestedinf}
m(r)=\inf\{E_{0, r}(\bx): \bx\in\A_{0, r, \bx_0}\}=\inf_{\bz\in\R^N}\left\{\inf\{E_{0, r}(\bx): \bx\in\A_{0, r, \bx_0, \bz}\}\right\}.
\end{equation}
Given that the final end-point conditions for minimizing paths for the inner infimum in \eqref{nestedinf} are uniformly bounded (Lemma \ref{auxiliar1}) and the inner expression as a function of $\bz$ is continuous (Lemma \ref{auxiliar2}), the outer infimum in $\bz\in\R^N$ in that same equality must be, in fact, a minimum. Let $\bz_r$ be such vector minimizer (depending possible on $r$) in such a way that
$$
m(r)=\inf\{E_{0, r}(\bx): \bx\in\A_{0, r, \bx_0, \bz_r}\}.
$$
Going back to \eqref{igualdad}, by setting $\bz=\bz_r$, we can conclude that
\begin{equation}\label{suma}
m(r+h)\le m(r)+\inf\{E_{r, r+h}(\bx): \bx\in\A_{r, r+h, \bz_r}\}.
\end{equation}
In particular, the linear path 
\begin{equation}\label{linear}
\bx(t)=\bz_r+(t-r)\bbf(\bz_r),\quad t\in(r, r+h), 
\end{equation}
is feasible in the infimum in the right-hand side of \eqref{suma}, and thus
$$
m(r+h)\le m(r)+\int_r^{r+h}|\bbf(\bz_r)-\bbf(\bz_r+(t-r)\bbf(\bz_r))|\,dt.
$$
Bearing in mind that the integrand for the error functional is non-negative, and recalling that $h>0$, we finally find that
$$
0\le\frac1h(m(r+h)-m(r))\le \frac1h\int_r^{r+h}|\bbf(\bz_r)-\bbf(\bz_r+(t-r)\bbf(\bz_r))|\,dt.
$$
The self-continuity of $\bbf$ at $\bz_r$ implies clearly that the value function $m(r)$ admits a right derivative at every point, and that it vanishes. Since $m(0)=0$, we conclude that $m$ identically vanishes in $[0, T]$. In particular $m(T)=0$, i.e. 
\begin{equation}\label{conclusion}
\inf\left\{\int_0^T |\bx'(t)-\bbf(\bx(t))|\,dt: \bx\in\A_{0, T, \bx_0}\right\}=0.
\end{equation}

For the second part of the proof, we start by interpreting \eqref{conclusion} in the sense that there is a sequence of absolutely continuous paths $\{\bx_j\}\subset\A_{0, T, \bx_0}$ such that
$$
E_{0, T}(\bx_j)\searrow0\hbox{ as }j\to\infty.
$$
For the sake of simplicity, we just write $E$ to mean $E_{0, T}$, and so $E(\bx_j)\to0$. We claim 
that there is $\bx\in\A_{0, t, \bx_0}$ such that $\bx_j\rightharpoonup\bx$ in $W^{1, 1}(0, T; \R^N)$.
To show this, let $J\subset[0, T]$ be arbitrary. Just as we have argued earlier, 
$$
\int_J|\bx'_j(t)|\,dt\le E(\bx_j)+C_1\int_J|\bx_j(s)|\,ds+C_0|J|.
$$
Further, by Lemma \ref{auxiliar1}, 
$$
\int_J|\bx'_j(t)|\,dt\le E(\bx_j)+(C_1e^{C_1T}(TC_0+|\bx_0|+E(\bx_j))+C_0)|J|.
$$
Given that $E(\bx_j)\to0$, the arbitrariness of the subset $J$ in this inequality, implies not only that $\{\bx_j\}$ is uniformly bounded in $W^{1, 1}(0, T; \R^N)$, but also that $\{\bx'_j\}$ is equi-integrable. Consequently, we conclude our claim $\bx_j\rightharpoonup\bx$ in $W^{1, 1}(0, T; \R^N)$. 

A suitable interpretation of our two steps leads directly to Definition \ref{generalizada}, if we set
$$
\varepsilon_j(t)=\bx'_j(t)-\bbf(\bx_j(t)), 
$$
and recall that 
$$
\int_0^T\varepsilon_j(t)\,dt=E(\bx_j)\to0.
$$

If, more in general, there is a path $\phi(t; \bx)$ as in Definition \ref{basica} possibly not always identical to the line 
$$
\phi(t; \bx)=\bx+t\bbf(\bx),
$$ 
such path is feasible in 
 the infimum in the right-hand side of \eqref{suma}, and replaces the linear path in \eqref{linear}.  Thus
$$
m(r+h)\le m(r)+\int_r^{r+h}|\phi'(t; \bz_r)-\bbf(\phi(t; \bz_r))|\,dt,
$$
and we proceed as above, under \eqref{germen}, to conclude that $m(T)=0$. The second step would be identical. 
\end{proof}

\section{A legitimate extension}
According to \cite{filippov}, and as it could not be accepted otherwise, one of the most important requirements for a fruitful extension of the concept of solution for a discontinuous differential system is that in the case the right-hand side is continuous, the only possibility of any kind of generalized solution is the classical one. 

\begin{corollary}
Suppose $\bbf(\bx):\Omega\subset\R^N$ is continuous. Then every generalized solution of the corresponding system $\bx'=\bbf(\bx)$ is a classical solution. 
\end{corollary}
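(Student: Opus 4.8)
The plan is to reduce the statement to the uniqueness of weak limits in $L^1$. By hypothesis, a generalized solution $\bx$ comes equipped with a sequence $\{\bx_j\}\subset W^{1,1}(0,T;\Omega)$ satisfying $\bx_j'=\bbf(\bx_j)+\varepsilon_j$ with $\bx_j\rightharpoonup\bx$ in $W^{1,1}(0,T;\R^N)$ and $\int_0^T|\varepsilon_j(t)|\,dt\to0$. Since $\bx_j'\rightharpoonup\bx'$ weakly in $L^1$, if I can show that $\bbf(\bx_j)\to\bbf(\bx)$ strongly in $L^1$, then passing to the limit in the identity $\bx_j'=\bbf(\bx_j)+\varepsilon_j$ forces $\bx'=\bbf(\bx)$ almost everywhere, which is precisely the assertion that $\bx$ is a classical (absolutely continuous) solution.

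First I would upgrade the weak convergence to uniform convergence. Weak convergence in $W^{1,1}$ means precisely that $\{\bx_j\}$ is bounded in $W^{1,1}$ and $\{\bx_j'\}$ converges weakly in $L^1$; by the Dunford--Pettis theorem the latter entails that $\{\bx_j'\}$ is equi-integrable. Equi-integrability together with the uniform bound gives, through the identity $\bx_j(t)-\bx_j(s)=\int_s^t\bx_j'(\tau)\,d\tau$, uniform equicontinuity and uniform boundedness of $\{\bx_j\}$. Arzelà--Ascoli then yields a subsequence (not relabelled) converging uniformly on $[0,T]$, whose limit must coincide with the weak limit $\bx$. Because the conclusion $\bx'=\bbf(\bx)$ concerns $\bx$ alone, working along this subsequence is harmless.

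Next I would exploit continuity. The uniform limit set $\bx([0,T])$ is a compact subset of the open set $\Omega$, hence sits at positive distance from $\partial\Omega$; so for $j$ large the images $\bx_j([0,T])$ all lie inside a fixed compact set $K\subset\Omega$. On $K$ the continuous field $\bbf$ is uniformly continuous, so the uniform convergence $\bx_j\to\bx$ propagates to $\bbf(\bx_j)\to\bbf(\bx)$ uniformly, and a fortiori in $L^1(0,T)$. Combining with $\varepsilon_j\to\bcero$ in $L^1$ gives $\bx_j'=\bbf(\bx_j)+\varepsilon_j\to\bbf(\bx)$ strongly in $L^1$; matching this against the weak limit $\bx_j'\rightharpoonup\bx'$ and invoking uniqueness of weak limits closes the argument.

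The hard part will be the passage from weak $W^{1,1}$-convergence to uniform convergence, the one place where the low integrability exponent matters: unlike $W^{1,p}$ with $p>1$, the embedding $W^{1,1}(0,T)\hookrightarrow C([0,T])$ is not compact, so mere boundedness in $W^{1,1}$ is insufficient. The decisive point is that weak convergence of the derivatives in $L^1$ supplies the equi-integrability that boundedness alone does not, and it is this equi-integrability that powers the Arzelà--Ascoli compactness above. A minor secondary care is the localization to a compact $K\subset\Omega$, needed because $\bbf$ is only assumed continuous, hence merely uniformly continuous on compacta, and not uniformly continuous on all of $\Omega$.
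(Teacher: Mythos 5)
Your proof is correct, but it takes a genuinely different route from the paper's. The paper disposes of the corollary in one stroke: since the integrand $\phi(\bz,\bu)=|\bu-\bbf(\bz)|$ is convex in $\bu$ and continuous in $\bz$, the error functional $E$ is weakly lower semicontinuous on $W^{1,1}(0,T;\R^N)$ (citing the standard theorem in Dacorogna), whence $0\le E(\bx)\le\liminf_j E(\bx_j)=\lim_j\int_0^T|\varepsilon_j|\,dt=0$ and the equation holds a.e. You instead prove the needed convergence by hand: Dunford--Pettis extracts equi-integrability of $\{\bx_j'\}$ from their weak $L^1$ convergence, this powers Arzel\`a--Ascoli to upgrade weak $W^{1,1}$ convergence to uniform convergence, and the localization to a compact $K\subset\Omega$ plus uniform continuity of $\bbf$ on $K$ lets you pass to the limit in $\bx_j'=\bbf(\bx_j)+\varepsilon_j$ and match strong against weak limits of the derivatives. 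In effect you reprove, in this special case, the semicontinuity statement the paper imports wholesale; what your argument buys is self-containedness (no appeal to the lower semicontinuity machinery of the calculus of variations) plus the uniform convergence $\bx_j\to\bx$ as a by-product, while the paper's argument buys brevity and aligns with the variational framing of the whole article. You correctly identified the genuine delicacy --- boundedness in $W^{1,1}$ alone gives no compactness, and it is precisely the equi-integrability encoded in the \emph{weak} convergence of $\bx_j'$ that rescues Arzel\`a--Ascoli. One small loose end: having obtained $\bx'(t)=\bbf(\bx(t))$ for a.e.\ $t$, you should, as the paper does, note that continuity of $\bbf$ forces $t\mapsto\bbf(\bx(t))$ to be continuous, so $\bx$ is in fact $\C^1$ and the equation holds for \emph{every} $t$; ``classical solution'' asks for slightly more than an a.e.\ identity, though the upgrade is immediate.
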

\begin{proof}
Suppose that a certain path 
$$
\bx(t):[0, T]\to\Omega
$$ 
is a generalized solution according to Definition \ref{generalizada}, that is there is a sequence of absolutely continuous paths $\{\bx_j\}$ such that
$$
\bx_j\rightharpoonup\bx\hbox{ in }W^{1, 1}(0, T; \Omega),\quad E(\bx_j)\to0.
$$
This time the integral functional 
$$
E(\bz)=\int_0^T|\bz'(t)-\bbf(\bz(t))|\,dt
$$
is weak lower-semicontinuous in $W^{1, 1}(0, T; \R^N)$ because its integrand
$$
\phi(\bz, \bu)=|\bu-\bbf(\bz)|
$$
is convex in the variable $\bu$, and continuous in $\bz$ (\cite{dacorogna}). Hence
$$
0\le E(\bx)\le\liminf_{j\to\infty}E(\bx_j)=0,
$$
and the differential system holds for $\bx$ at a.e. $t\in[0, T]$. Once again, since $\bbf$ is continuous, it must hold for every time $t$, and $\bx$ is a classical solution. 
\end{proof}

In the preceding proof, the weak lower semicontinuity of functional $E$ is unavoidable. In general, under the self-continuity condition, all we can ensure is that \eqref{conclusion} holds. Consequently, there is a minimizing sequence $\{\bx_j\}$ of paths in $\A_{0, T, \bx_0}$ with $E(\bx_j)\searrow0$ as $j\to\infty$; and there is a weak limit 
$$
\bx_j\rightharpoonup\bx\hbox{ in }W^{1, 1}(0, T; \R^N).
$$
If the mapping $\bbf$ is not continuous, functional $E$ cannot be ensured to be weak lower semicontinuous, and thus there are two possibilities for the limit path $\bx\in\A_{0, T, \bx_0}$:
\begin{enumerate}
\item either $E(\bx)=0$; or 
\item $E(\bx)>0$.
\end{enumerate}
In the first case, we conclude that
$$
\bx'(t)=\bbf(\bx(t))\hbox{ for a.e. }t\in(0, T),
$$
and the ODE system \eqref{primera} holds point-wise for a.e. $t\in(0, T)$. Indeed, whenever this is the case for a certain absolutely continuous path $\overline\bx$, i. e.
$$
\overline\bx'(t)=\bbf(\overline\bx(t))\hbox{ for a.e. }t\in (0, T),
$$
then $\overline \bx$ becomes a generalized solution according to Definition \ref{basica}. 

For the second case, the limit path $\bx$ will be far from being a point-wise solution, though the ODE system \eqref{primera} might or might not admit other such solutions. We will see next some elementary examples to clarify all these possibilities. 

\section{Some illustrative examples}\label{ejemplos}
The first mandatory observation is that a.e. solutions of \eqref{primera}
$$
\bx'(t)=\bbf(\bx(t))\hbox{ a.e. in }(0, T),\quad \bx(0)=\bx_0\in\R^N,\quad \bbf(\bx):\Omega\subset\R^N\to\R^N,
$$
are generalized solutions, regardless of whether $\bbf$ is self-continuous, or even if it is not defined everywhere. It may admit additional solutions depending on how $\bbf$ is defined and whether it is self-continuous. This is the case for the simple example
$$
\bbf(\bx)=(-1, x_1/|x_1|)=\begin{cases}(-1, 1),& x_1>0,\cr (-1, -1),& x_1<0,\end{cases},\quad \bx=(x_1, x_2), 
$$
which admits generalized solutions starting at $x_1=0$, regardless of how $\bbf$ is defined on $x_1=0$. If it is defined in this vertical line in a way that $\bbf$ becomes self-continuous (for example by putting $\bbf(0, x_2)=(0, 1)$ for all $x_2$), then there are more solutions starting at this vertical axis. The situation is drastically distinct with the example
$$
\bbf(\bx)=(-x_1/|x_1|, 1)=\begin{cases}(-1, 1),& x_1>0,\cr (1, 1),& x_1<0,\end{cases}
$$
for which  existence of generalized solutions through the vertical axis $x_1=0$ depends on how $\bbf$ is defined there. The only possibility is to set 
$$
\bbf(0, x_2)=(0, f(x_2))
$$ 
for a continuous scalar function $f$. See below.

The following examples may further help in beginning to have some intuition on the self-continuity condition. They are academic and elementary, but instructive. Integral curves are easily deduced in all cases. 
\begin{enumerate}
\item\label{uno} For $\bx\in\R^N$, consider the discontinuous vector field
$$
\bbf(\bx)=\frac1{|\bx|}\bx,\bx\neq\bcero,\quad \bbf(\bcero)=\bn,
$$
where the unitary vector $\bn$ is chosen arbitrarily in $\R^N$. Every such $\bbf$ is easily checked to be self-continuous, and hence there is, obviously, an integral curve starting at the origin as well. If $\bn$ is not unitary (except for the null vector), the resulting field is not self-continuous, yet it admits the same generalized solution starting at zero. If $\bn=\bcero$, $\bbf$ becomes trivially self-continuous as pointed out earlier. 
\item For $\bx\in\R^2$, the discontinuous field
$$
\bbf(\bx)=\frac1{|\bx|}\bQ\bx,\bx\neq\bcero,\quad \bbf(\bcero)\neq\bcero,\quad \bQ=\begin{pmatrix}0&1\cr-1&0\end{pmatrix},
$$
is not self-continuous. It is clear that there is no consistent way to define any kind of solution starting at the origin. The only possibility is to transform the origin in an equilibrium point by setting $\bbf(\bcero)=\bcero$. 

For the three-dimensional variant
$$
\bbf(\bx)=\frac1{|\hat\bx|}\bQ\bx,\hat\bx\neq\bcero,\quad \bx=(x_1, x_2, x_3), \hat\bx=(x_1, x_2, 0), \bQ=\begin{pmatrix}
0&1&0\cr-1&0&0\cr0&0&1\end{pmatrix},
$$
and
$$
\bbf(0, 0, x_3)=(0, 0, f(x_3)),
$$
with $f$ continuous, it is readily seen that this $\bbf$ is self-continuous. The only generalized integral curves starting at the vertical axis cannot leave such axis. 
\item For the field
$$
\bbf(\bx)=\frac1{|\bx|(|\bx|-1)}\bQ\bx,\quad |\bx|\neq1, \bx\in\R^2, 
$$
to become self-continuous, we need to define it on the unit circle $|\bx|=1$ in such a way that it becomes an invariant manifold for it; and, as above, we also need to put $\bbf(\bcero)=\bcero$. A suitable extension to $\R^3$ would require an invariant tube (cylinder). 
\item The vector field
$$
\bbf(\bx)=(\sin(1/|\bx|), \cos(1/|\bx|)), \bx\neq\bcero,\quad \bbf(\bcero)\neq\bcero,
$$
is not self-continuous. This is similar to the previous example. The only possibility to convert it to a self-continuous field is to set $\bbf(\bcero)=\bcero$. 
\item The vector field
$$
\bbf(\bx)=\begin{cases}(-x_1/|x_1|, 1),& x_1\neq0,\cr (1, 0),& x_1=0,\end{cases},\quad \bx=(x_1, x_2), 
$$
is not self-continuous at the vertical axis. There is no way to define integral curves starting at such points. However, the variant
$$
\bbf(\bx)=\begin{cases}(-x_1/|x_1|, 1),& x_1\neq0,\cr (0, f(x_2)),& x_1=0,\end{cases},\quad \bx=(x_1, x_2), 
$$
is self-continuous for every continuous scalar function $f$, and its integral curves starting at the vertical axis stay there.
In addition, integral curves starting off the vertical axis, eventually enter into it, and from that moment on, remain there. They are defined for all time. 
\item The vector field
$$
\bbf(\bx)=\begin{cases}(x_1/|x_1|, 1),& x_1\neq0,\cr (0, f(x_2)),& x_1=0,\end{cases},\quad \bx=(x_1, x_2), 
$$
is not continuous, but it is self-continuous, for every continuous scalar function $f$. We evidently see non-uniqueness of solutions starting at the vertical axis. 
The variant
$$
\bbf(\bx)=\begin{cases}(1, 1),& x_1\ge0,\cr (-1, 1),& x_1<0,\end{cases},\quad \bx=(x_1, x_2), 
$$
is also self-continuous as well as
$$
\bbf(\bx)=\begin{cases}(1, 1),& x_1>0,\cr (-1, 1),& x_1\le0.\end{cases}
$$
In these two cases, the vertical solutions are lost. 
\item The vector field
$$
\bbf(\bx)=\begin{cases}(0, x_1/|x_1|),& x_1\neq0,\cr (1, 0),& x_1=0,\end{cases},\quad \bx=(x_1, x_2), 
$$
is not self-continuous at the vertical axis. Yet, there are generalized integral curves of the form 
$$
\bx_+(t)=(t, 0),\quad \bx_-(t)=(-t, 0),
$$
for $t\ge0$, because they are limits of true integral curves 
$$
\bx_{+, j}(t)=(t, 1/j),\quad \bx_{-, j}(t)=(-t, -1/j).
$$
The variation
$$
\bbf(\bx)=\begin{cases}(0, x_1/|x_1|),& x_1\neq0,\cr (0, f(x_2)),& x_1=0,\end{cases},\quad \bx=(x_1, x_2), 
$$
is self-continuous, and, in addition to the previous integral curves, there is also a vertical integral curve running through the vertical axis. 
\end{enumerate}

A standard, general situation for a unitary, discontinuous vector field corresponds to
$$
\F(\bx)=\frac1{|\bbf(\bx)|}\bbf(\bx),
$$
where $\bbf(\bx)$ is a continuous vector field in $\R^N$. The points of discontinuity of $\F$ are exactly the equilibria of $\bbf$. It would be interesting to explore the self-continuity of $\F$ depending on the nature of the underlying equilibrium point for $\bbf$ even in the case that $\bbf$ is linear. This issue goes, however, beyond the scope of this initial contribution. 

\section{The case of Sobolev fields}\label{sob}
After having gained some familiarity with the self-continuity condition, we would like to address the case of a Sobolev field 
$$
\bbu\in W^{1, p}(\Omega; \R^N),\quad \bbu(\bbx):\Omega\subset\R^N\to\R^N, p\ge1.
$$
Note the change of notation $\bbf\mapsto\bbu$, $\bx\mapsto\bbx$ with respect to previous sections. Of course, if $p>N$ field $\bbu$ is continuous, and hence self-continuous. On the other hand, it is not difficult to see that for $p<N$, one cannot expect fields to be self-continuous in general, as the following example shows. 

Consider
$$
\bbu(\bbx)=|\bbx|^\alpha\bbx,\quad \bbx\in\Omega\equiv\bbB,\quad \alpha<-1.
$$
We would like this field to belong to $W^{1, p}(\bbB; \R^N)$ for $p<N$,
where $\bbB$ is the unit ball in $\R^N$, and exponent $\alpha$ is chosen in the interval $(-N/p, -1)$. Then
$$
\nabla\bbu(\bbx)=|\bbx|^\alpha\left(\alpha\frac\bbx{|\bbx|}\otimes\frac\bbx{|\bbx|}+\id\right),
$$
where $\id$ represents the identity matrix of dimension $N\times N$. 
Because the integral
$$
\int_{\bbB}|\bbx|^{p\alpha}\,d\bbx
$$
is finite for the given range of exponents, we conclude that indeed $\bbu\in W^{1, p}(\Omega; \R^N)$. However, it is also clear, because $\alpha<-1$, that none of the limits 
$$
\lim_{r\to0^+}\bbu(r\be),\quad |\be|=1, 
$$
exists. Hence $\bbu$ cannot be self-continuous at the origin. 

It is well-known (\cite{leoni}) that Sobolev fields are absolutely continuous along lines through almost every point. However, the proof of our main existence result (Theorem \ref{central}) does not seem to be valid if the self-continuity condition \eqref{limite} is only assumed for a.e. vector $\bx$ instead of for every such point. The following result yields an easy, sufficient condition under which Sobolev fields can be expected to be self-continuous. 

\begin{proposition}
Let $\bbu$ be a Sobolev field in $\Omega\subset\R^N$. If for every point $\bbx_0\in\Omega$ of discontinuity of $\bbu$, there is $\rho>0$ with the ball $\bbB_\rho(\bbx_0)\subset\Omega$ centered at $\bbx_0$ and radius $\rho$, such that the function
$$
\bbx\mapsto|\bbx-\bbx_0|^{-N}|\nabla\bbu(\bbx)(\bbx-\bbx_0)|
$$
belongs to $L^1(\bbB_\rho(\bbx_0))$, then $\bbu$ admits a representative which is self-continuous.
\end{proposition}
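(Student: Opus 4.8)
The plan is to reduce everything to the behaviour of $\bbu$ along rays emanating from a discontinuity point, and to read the hypothesis as an integrability statement for the radial derivative. At a point of continuity self-continuity is immediate, since there $\lim_{\epsilon\to0^+}\bbu(\bbx+\epsilon\bbu(\bbx))=\bbu(\bbx)$ follows at once from $\lim_{\bby\to\bbx}\bbu(\bby)=\bbu(\bbx)$. So the whole issue is to produce, at each discontinuity point $\bbx_0$, a value for the representative together with a germ as in \eqref{curva}--\eqref{germen}.

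The key computation is a passage to polar coordinates centred at $\bbx_0$. Writing $\bbx=\bbx_0+r\omega$ with $r>0$ and $\omega\in\bbS^{N-1}$, one has $\nabla\bbu(\bbx)(\bbx-\bbx_0)=r\,\partial_r\bbu(\bbx_0+r\omega)$, where $\partial_r\bbu(\bbx_0+r\omega)=\nabla\bbu(\bbx_0+r\omega)\,\omega$ is the radial derivative. Hence
$$
|\bbx-\bbx_0|^{-N}\,|\nabla\bbu(\bbx)(\bbx-\bbx_0)|=r^{1-N}\,|\partial_r\bbu(\bbx_0+r\omega)|,
$$
and, since $d\bbx=r^{N-1}\,dr\,d\sigma(\omega)$, the hypothesis becomes exactly
$$
\int_{\bbS^{N-1}}\Big(\int_0^\rho|\partial_r\bbu(\bbx_0+r\omega)|\,dr\Big)\,d\sigma(\omega)<+\infty.
$$
By Fubini, for $\sigma$-a.e. direction $\omega$ the map $r\mapsto\bbu(\bbx_0+r\omega)$ is then absolutely continuous on $(0,\rho)$ with integrable derivative, so the radial limit $L(\omega)=\lim_{r\to0^+}\bbu(\bbx_0+r\omega)$ exists. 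This is exactly what fails in the example $\bbu=|\bbx|^\alpha\bbx$: there $|\bbx|^{-N}|\nabla\bbu(\bbx)\bbx|=|\alpha+1|\,|\bbx|^{\alpha+1-N}$, whose integral diverges precisely when $\alpha<-1$, which is reassuring on the sharpness of the condition.

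It remains to fix the value $\bbu(\bbx_0)$ and the germ. If $\bbx_0$ is a Lebesgue point with precise value $\bv$, a Lebesgue-point argument (dominating the radial oscillation $|\bbu(\bbx_0+r\omega)-\bv|$ by $|L(\omega)-\bv|+\int_0^\rho|\partial_r\bbu|\,dr\in L^1(\bbS^{N-1})$ in order to pass to the limit under the spherical average) shows that in fact $L(\omega)=\bv$ for $\sigma$-a.e. $\omega$. I would then take as germ the straight ray $\phi(\epsilon;\bbx_0)=\bbx_0+\epsilon\bbu(\bbx_0)$, for which $\phi(0)=\bbx_0$ and $\phi'(0)=\bbu(\bbx_0)$; then $\bbu(\phi(\epsilon;\bbx_0))\to\bbu(\bbx_0)$ is precisely the statement that the radial limit in the single direction $\bbu(\bbx_0)/|\bbu(\bbx_0)|$ equals $\bv$.

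The main obstacle is exactly this last reconciliation: the integrability hypothesis only delivers radial limits for almost every direction, whereas the germ forces the one specific direction $\bbu(\bbx_0)/|\bbu(\bbx_0)|$, which could lie in the exceptional $\sigma$-null set. Two ways around it seem available. When $\bv=\bcero$ condition \eqref{germen} holds trivially through the constant curve, and in general one may try to assign $\bbu(\bbx_0)=\bcero$ on the discontinuity set; this is admissible as a mere change of representative only when that set is Lebesgue-null, and one must still check that the modification does not destroy self-continuity at the surrounding continuity points through which some germ ray passes. Alternatively, for a genuinely nonzero value one redefines $\bbu$ along the single germ ray — a one-dimensional, hence null, set — so that its radial limit becomes $\bv$, which leaves the representative unchanged off a null set provided the discontinuity set is small (isolated points or a lower-dimensional manifold, as in all the examples of Section \ref{ejemplos}). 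Controlling these pointwise modifications globally, when the discontinuity set is large, is the delicate point and is where I would concentrate the effort.
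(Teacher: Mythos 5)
Up to the decisive last step your argument coincides with the paper's own proof: the same polar--coordinates identity turning the hypothesis into the finiteness of $\int_{\bbS}\int_0^\rho|\nabla\bbu(\bbx_0+r\be)\,\be|\,dr\,d\be$, and the same Fubini argument showing that for a.e.\ $\be\in\bbS$ the section $r\mapsto\bbu(\bbx_0+r\be)$ lies in $W^{1,1}(0,\rho)$ and hence has a limit as $r\to0^+$ (your check against $|\bbx|^\alpha\bbx$ is a nice bonus). But where the paper concludes, you stop: your final paragraph lists tentative repairs and ends by saying the delicate point ``is where I would concentrate the effort.'' As a proof the proposal is therefore incomplete, and the gap is exactly the one you flag: almost-every-direction radial limits do not control the single direction $\bbu(\bbx_0)/|\bbu(\bbx_0)|$ that the rigid condition \eqref{limite} would demand.

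The missing idea is that the proposition does not ask for \eqref{limite} with a value of $\bbu(\bbx_0)$ fixed in advance. Self-continuity here is the flexible, curve-based notion of part (2) of Definition \ref{basica}, and the representative's value at a discontinuity point is yours to choose \emph{together with} the germ: the paper picks, at each such point, one good direction $\be\in\bbS$, takes the ray $\phi(\epsilon;\bbx_0)=\bbx_0+\epsilon\be$ as the curve \eqref{curva}, and assigns the value at $\bbx_0$ through the radial limit along that same ray, so that \eqref{germen} is precisely the already-established existence of that limit. Your normalization $L(\omega)=\bv$ via Lebesgue points aims at the wrong target: in the paper's Example \ref{uno} of Section \ref{ejemplos}, $\bbu=\bbx/|\bbx|$, the origin is not a Lebesgue point, the radial limits $L(\omega)=\omega$ genuinely depend on the direction, and yet self-continuity holds because along any chosen $\be$ the radial limit coincides with the velocity of the ray --- the mechanism is choose-direction-then-assign-value, the reverse of your assign-value-then-hope-for-the-direction. (To be fair, the paper's closing sentence is itself terse: it tacitly uses that the limit along the chosen ray is an admissible germ velocity, exactly as in that example.) Finally, your worry that redefinition could destroy self-continuity at surrounding continuity points dissolves in the paper's scheme: the new value assigned at a discontinuity point is a limit of genuine nearby values of $\bbu$, so along a germ ray through a continuity point $\bbx$ the redefined values still converge to $\bbu(\bbx)$; it is only an arbitrary assignment such as your fallback $\bbu(\bbx_0)=\bcero$ that can break \eqref{limite} nearby.
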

\begin{proof}
For a Sobolev field $\bbu$, define the sections
$$
u_{\bbx_0, \be}(r)=\bbu(\bbx_0+r\be),\quad r\in(0, \rho),
$$
for unit, arbitrary vector $\be$. It is immediate to calculate 
$$
u'_{\bbx_0, \be}(r)=\nabla\bbu(\bbx_0+r\be)\be.
$$
If $\bbS$ stands for the unit sphere in $\R^N$, we claim that
\begin{equation}\label{polar}
\int_\bbS\int_0^\rho |u'_{\bbx_0, \be}(r)|\,dr\,d\be
\end{equation}
is finite. Indeed, by hypothesis,
$$
\int_{\bbB_\rho(\bbx_0)}|\bbx-\bbx_0|^{-N}|\nabla\bbu(\bbx)(\bbx-\bbx_0)|\,d\bbx
$$
is finite. But this integral can be elementarily written precisely in the form \eqref{polar}. 
In particular, for a.e. $\be\in\bbS$, the functions $u_{\bbx_0, \be}(r)\in W^{1,1}(0, \rho)$, and hence their limits as $r\to0^+$ exist. This is precisely the self-continuity property, if for each point $\bbx_0$ of discontinuity of $\bbu$, one such direction $\be\in\bbS$ of continuity is chosen. 
\end{proof}

A clear application of this result is Example \ref{uno} in Section \ref{ejemplos}. 

\begin{corollary}
Let $\bbu$ be a Sobolev field in $\Omega\subset\R^N$ complying with the hypothesis in the preceding proposition. The ODE system
$$
\bx'(t)=\bbu(\bx(t)),\quad t\in(0, T),\quad \bx(0)=\bbx_0,
$$
admits at least one generalized solution according to Definition \ref{generalizada} for every $\bbx_0\in\Omega$, and some $T>0$. 
\end{corollary}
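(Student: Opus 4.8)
The plan is to deduce the statement from the global existence Theorem~\ref{central} by a localization and truncation argument, using the preceding Proposition only to secure self-continuity. First I would fix the self-continuous representative of $\bbu$ provided by that Proposition: at every continuity point self-continuity holds automatically, and at every discontinuity point it is guaranteed by the hypothesis, so this representative is self-continuous at each point of $\Omega$. The only two features of Theorem~\ref{central} not yet in place are that the field there is defined on all of $\R^N$, whereas $\bbu$ lives on $\Omega$, and that it must obey a \emph{global} linear growth bound, whereas a Sobolev field need not even be locally bounded near a singular point. These are exactly the obstructions anticipated in item~(1) of the generalizations announced after Theorem~\ref{central}, and they are what restrict the guaranteed existence interval to ``some $T>0$'' rather than every $T$.

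Next I would fix $\bbx_0\in\Omega$, choose $\rho>0$ with $\overline{\bbB_\rho(\bbx_0)}\subset\Omega$, and build a field $\tilde{\bbu}:\R^N\to\R^N$ of the form $\tilde{\bbu}(\bbx)=\eta(\bbx)\,T_M(\bbu(\bbx))$, where $\eta$ is a smooth radial cutoff equal to $1$ on $\bbB_{\rho/2}(\bbx_0)$ and $0$ off $\bbB_\rho(\bbx_0)$, and $T_M$ is the $1$-Lipschitz radial retraction onto the ball of radius $M$ (a magnitude cap). This $\tilde{\bbu}$ is globally bounded by $M$, hence of linear growth, and defined on all of $\R^N$. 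The key point is that it remains self-continuous: since $\eta\ge0$ and $T_M$ preserve direction, the germ $\tilde{\bbu}(\bbx)$ is parallel to $\bbu(\bbx)$, so moving along $\tilde{\bbu}(\bbx)$ is moving along $\bbu(\bbx)$ at a rescaled positive speed, whence $\bbu(\bbx+\epsilon\tilde{\bbu}(\bbx))\to\bbu(\bbx)$ by self-continuity of $\bbu$; applying the continuous maps $T_M$ and $\eta$ then yields \eqref{germen} for $\tilde{\bbu}$ (the condition being void wherever $\tilde{\bbu}$ vanishes). Theorem~\ref{central} now applies verbatim to $\tilde{\bbu}$ and produces, for every $T>0$, a generalized solution $\bx$ of $\bx'=\tilde{\bbu}(\bx)$, $\bx(0)=\bbx_0$, together with an approximating sequence $\{\bx_j\}\subset\A_{0,T,\bbx_0}$ with $E(\bx_j)\searrow0$ and $\bx_j\rightharpoonup\bx$.

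Then I would confine the motion along the tail of the sequence. The integral estimate underlying Lemma~\ref{auxiliar1}, applied to $\tilde{\bbu}$ with its explicit constants, gives $\sup_{t\in[0,T]}|\bx_j(t)-\bbx_0|\le E(\bx_j)+(C_1R+C_0)T$, where $R$ bounds $\{\bx_j\}$ uniformly for $T\le1$. Since $E(\bx_j)\to0$, for all $j$ beyond some $j_0$ and $T$ small enough the right-hand side drops below $\rho/2$; discarding the first $j_0$ paths (which changes neither the weak limit nor $E(\bx_j)\searrow0$), I may assume every $\bx_j$, and hence $\bx$, stays inside $\bbB_{\rho/2}(\bbx_0)$. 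If in addition $\bbu$ is bounded, say by $M_0$, on $\overline{\bbB_\rho(\bbx_0)}$, then choosing $M>M_0$ forces $\tilde{\bbu}\equiv\bbu$ on this region, so the two error functionals coincide along the sequence, $E(\bx_j)\to0$ persists with $\bbu$ in place of $\tilde{\bbu}$, and $\bx$ is a generalized solution of the original system on $(0,T)$ in the sense of Definition~\ref{generalizada}.

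The main obstacle is precisely the unbounded case. When $\bbu$ is locally bounded near $\bbx_0$ — as for the unitary field of item~(\ref{uno}) in Section~\ref{ejemplos}, to which the Proposition directly applies — the argument above is clean. But if $\bbx_0$ is a singular point at which $\bbu$ is unbounded, the set $\{|\bbu|\le M\}$ no longer contains a full neighborhood of $\bbx_0$, so $\tilde{\bbu}$ need not equal $\bbu$ where the solution travels, and the confinement to the region of agreement is no longer automatic. Controlling the motion then requires leaning on the self-continuity germ to steer the solution away from the singularity at a controlled rate, which is exactly what confines the guaranteed existence time to small $T$; alternatively, one invokes directly the local version of Theorem~\ref{central} announced in item~(1), whose hypotheses are met once self-continuity is established near $\bbx_0$.
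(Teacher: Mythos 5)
Your proposal is correct in substance, but it is a genuinely different and far more explicit route than the paper's, because the paper in fact offers \emph{no} proof of this corollary at all: it reads it as an immediate consequence of the preceding proposition (which supplies the self-continuous representative) combined with the local variant of Theorem \ref{central} announced in item (1) after that theorem, the clause ``some $T>0$'' being precisely the hedge for the loss of the global linear bound \eqref{cotasup}. What you do differently is to avoid invoking that announced-but-unproved local variant: your truncated field $\tilde\bbu=\eta\,T_M(\bbu)$, extended by $\bcero$ off $\bbB_\rho(\bbx_0)$, reduces everything to the global Theorem \ref{central} as actually proved. Your key verification---that $\eta\ge0$ and the radial retraction $T_M$ preserve directions, so that $\tilde\bbu(\bx)=\lambda(\bx)\bbu(\bx)$ with $\lambda(\bx)\ge0$ and self-continuity survives by rescaling the parameter $\epsilon\mapsto\lambda(\bx)\epsilon$---is sound, and it is worth noting that it also survives the curve-based form of Definition \ref{basica} (replace $\phi(\epsilon;\bx)$ by $\phi(\lambda(\bx)\epsilon;\bx)$ and use that \eqref{germen} under \eqref{curva} forces $\bbu(\phi(\epsilon;\bx))\to\bbu(\bx)$); this matters because the proposition's proof only delivers self-continuity possibly in that generalized sense, not necessarily in the form \eqref{limite}. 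Your confinement step is also fine, though with $|\tilde\bbu|\le M$ you get the cleaner, non-circular bound $\sup_t|\bx_j(t)-\bbx_0|\le E(\bx_j)+MT$ directly, without the auxiliary radius $R$. The one real limitation is the one you honestly flag: the hypothesis of the proposition controls $\bbu$ only along a.e.\ ray through $\bbx_0$, and does not force local boundedness near a singular point (a spike along a null set of directions is compatible with the $L^1$ condition), so the identification $\tilde\bbu\equiv\bbu$ on the region the paths visit can fail there and your truncation argument then only proves existence for the truncated system. But since your fallback in that case---the local version of Theorem \ref{central} from item (1)---is exactly what the paper itself implicitly relies on, and that local version likewise presupposes a local linear bound, your proof matches the paper's level of rigor in the unbounded case and strictly exceeds it (by being complete) in the locally bounded case, which covers the paper's motivating instance, Example \ref{uno} of Section \ref{ejemplos}.
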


\section{Final comments}
One particularly important problem is that of the coherent extension to $\R^N$ of a discontinuous vector field off a certain given $N-1$ smooth manifold $\bbS$. Or even in more general terms, when and how a discontinuous vector field $\bbf$ can be extended across its discontinuity set so that it becomes self-continuous in a coherent way with its continuous environment. This looks like a pretty complicated problem, if we do not restrict attention to particular situations. Indeed, this is one of the problems that motivates the analysis in \cite{filippov}. It is clear, after Definition \ref{basica}, that if for a certain point $\bx_0$ of discontinuity of $\bbf$ there is a smooth curve $\sigma(\epsilon; \bx_0)$ such that 
$$
\sigma(0; \bx_0)=\bx_0,\quad \lim_{\epsilon\to0}\bbf(\sigma(\epsilon; \bx_0))=\sigma'(0; \bx_0),
$$
then $\bbf$ can be extended by putting
$$
\bbf(\bx_0)=\sigma'(0; \bx_0)
$$
to become self-continuous at $\bx_0$. 

We have seen in some of the examples in the last section, that if a smooth manifold $\bbS\subset\R^N$ of dimension $N-1$ separates two open regions in $\R^N$ where a vector field is continuous, but it is not so across $\bbS$, then one can define in any way the vector field $\bbf$ on $\bbS$ as long as $\bbf(\bx)$ is tangent at $\bx\in\bbS$. To show that such an extension is self-continuous, and hence there are integral curves through all points in $\bbS$, one has to resort to the more general definition of self-continuity involving curves (second part of Definition \ref{basica}). As just pointed out above, another matter is to isolate the self-continuous extension which is ``compatible" with the surrounding, i.e. the self-continuity extension favored or induced by the field $\bbf$ in a neighborhood of $\bbS$ (see again \cite{filippov}). 

Another situation of interest would be to examine the self-continuity of Sobolev fields more thoroughly, and in particular investigate when Sobolev fields admit coherent, self-continuous extensions beyond its continuity set.

\end{document}